\newcommand{\R}{\mathbb{R}}
\newcommand{\N}{\mathbb{N}}
\newcommand{\Z}{\mathbb{Z}}
\newcommand{\Q}{\mathbb{Q}}
\newtheorem{theorem}{Theorem}[section]
\newtheorem{fact}[theorem]{Theorem}
\newtheorem{lemma}[theorem]{Lemma}
\newtheorem{obs}[theorem]{Observation}
\newtheorem{ex}[theorem]{Example}
\newtheorem{fact1}{Theorem}
\newtheorem{cor1}[fact1]{Corollary}
\title{On the Equivalence of Equilibrium and Freezing States in Dynamical Systems}
\date{16 Apr 2025}
\begin{document}

\maketitle
\begin{center}
\author{C. Evans Hedges}
\end{center}

\begin{abstract} This paper is concerned with freezing phase transitions in general dynamical systems. A freezing phase transition is one in which, for a given potential $\phi$, there exists some inverse temperature $\beta_0 > 0$ such that for all $\alpha, \beta > \beta_0$, the collection of equilibrium states for $\alpha \phi$  and $\beta \phi$ coincide. In this sense, below the temperature $1 / \beta_0$, the system "freezes" on a fixed collection of equilibrium states.

We show that for a given invariant measure $\mu$, it is no more restrictive that $\mu$ is the freezing state for some potential than it is for $\mu$ to be the equilibrium state for some potential. In fact, our main result  applies to any collection of equilibrium states with the same entropy. In the case where the entropy map $h$ is upper semi-continuous, we show any ergodic measure $\mu$ can be obtained as a freezing state for some potential. 

In this upper semi-continuous setting, we additionally show that the collection of potentials that freeze at a single state is dense in the space of all potentials. However, in the $\Z$ action setting where the dynamical system satisfies specification, the collection of potentials that do not freeze contains a dense $G_\delta$. 
\end{abstract}

\section{Introduction}

\subsection{Dynamical Systems and Equilibrium States} 

We begin extremely generally, letting $X$ be a compact metric space and $G$ be a topological (semi)-group. We let $G$ act continuously on $X$ by $T$. I.e. $\{ T_g \}_{g \in G}$ forms a collection of continuous maps from $X$ to itself such that $T_g \circ T_h = T_{gh}$. Let $M_T(X)$ represent the set of all $T$-invariant Borel probability measures on $X$. It is well known that $M_T(X)$ forms a Choquet simplex whose extreme points are exactly the ergodic measures on $X$. In this paper we assume that $M_T(X)$ is non-empty and a notion of measure theoretic entropy is well defined, as in the case where $G$ is an amenable group. We also assume that the topological entropy of the dynamical system $(X, T)$ is finite. 

For a given $\mu \in M_T(X)$, we denote the Kolmogorov-Sinai entropy of $\mu$ by $h(\mu)$. The notion of entropy has been generalized by adding an "energy" component: we say a real-valued continuous function $\phi : X \rightarrow \R$ is a potential, and for a given potential $\phi \in C(X)$, the pressure of $\phi$ can be defined by 
$$P(\phi) = \sup  \{h(\mu) + \int \phi d\mu : \mu \in M_T(X) \}. $$
The pressure of a system $(X, T)$ under the potential $\phi$ can be thought to correspond with a scalar multiple of the free energy of the system. As such, we say $\mu$ is an equilibrium state for $\phi$ if $\mu$ attains the supremum in the definition of pressure. 

Whenever the entropy map $h$ is weak-$*$ upper semi-continuous, for example when $(X, T)$ is expansive as is the case for many symbolic systems and Anosov diffeomorphisms, it is well known that there exists at least one equilibrium state for any potential $\phi$. On the other hand, for a given dynamical system $(X, T)$ and a fixed ergodic $\mu \in M_T(X)$, it is unclear whether or not $\mu$ can be obtained as an equilibrium state for some potential $\phi$. In \cite{ruelle}, Ruelle showed that when $h$ is upper semi-continuous, any ergodic measure can be obtained as an equilibrium state: 

\begin{fact}(Corollary 3.17 \cite{ruelle}) Suppose $h$ is upper semi-continuous and let $\mathcal{E} = \{ \mu_1, \dots, \mu_n \}$ be any finite collection of ergodic measures. Then there exists a potential $\phi \in C(X)$ such that $\mathcal{E}$ is exactly the collection of ergodic equilibrium states for $\phi$. 
\end{fact}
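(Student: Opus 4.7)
I would build $\phi$ in two stages: first a ``skeleton'' $\phi_0$ equalizing the value of $h(\mu) + \int\phi\,d\mu$ across the chosen $\mu_i$'s, then a perturbation $\psi$ strictly penalizing every other competitor.

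\emph{Stage 1 (equalization).} Distinct ergodic measures are mutually singular and hence linearly independent as elements of $C(X)^*$, so a finite-dimensional Hahn--Banach argument produces $\phi_0 \in C(X)$ with $\int \phi_0\,d\mu_i = c - h(\mu_i)$ for any chosen $c > \max_i h(\mu_i)$. Because entropy is affine on $M_T(X)$ for amenable group actions and $\mu \mapsto \int \phi_0\,d\mu$ is affine as well, the functional $F_0(\mu) := h(\mu) + \int \phi_0\,d\mu$ then equals $c$ on the entire convex hull $K := \operatorname{conv}\{\mu_1,\dots,\mu_n\}$; the ergodic extreme points of $K$ are exactly $\mu_1,\dots,\mu_n$.

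\emph{Stage 2 (penalization).} By upper semi-continuity of $h$ and weak-$*$ compactness of $M_T(X)$, the superlevel set $\mathcal{A} := \{\mu : F_0(\mu) \geq c\}$ is compact. I seek $\psi \in C(X)$ with $\int \psi\,d\mu_i = 0$ for all $i$ and $F_0(\nu) + \int \psi\,d\nu < c$ whenever $\nu \notin K$. For each $\nu \in \mathcal{A} \setminus K$, geometric Hahn--Banach separates $\nu$ from the compact convex set $K$; after subtracting an element of the annihilator of $\{\mu_1,\dots,\mu_n\}$, this produces a continuous $g_\nu$ with $\int g_\nu\,d\mu_i = 0$ for every $i$ and $\int g_\nu\,d\nu < 0$, and by weak-$*$ continuity this negativity persists on a neighborhood of $\nu$. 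A finite cover of $\mathcal{A} \setminus K$ by such neighborhoods, combined positively, yields a single $\psi$ strictly negative on $\mathcal{A} \setminus K$; a final rescaling makes $|\int \psi\,d\nu|$ small enough to be dominated by the slack $c - F_0(\nu) > 0$ for $\nu \notin \mathcal{A}$. Setting $\phi := \phi_0 + \psi$ then gives a potential whose equilibrium-state set is exactly $K$, whose ergodic extreme points are $\{\mu_1,\dots,\mu_n\}$.

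\emph{Main obstacle.} The delicate point is controlling competitor measures $\nu \in \mathcal{A} \setminus K$ that accumulate on $K$: any $\psi$ vanishing on $K$ also nearly vanishes on such $\nu$ by weak-$*$ continuity, so maintaining the strict inequality $F_0(\nu) + \int \psi\,d\nu < c$ there is not automatic. The resolution should exploit that the set of equilibrium states of $F_0$ forms a face of the Choquet simplex $M_T(X)$; its ergodic extreme points beyond the $\mu_i$'s must then be handled iteratively, using the finite dimensionality of $K$ and the fact that only finitely many ``bad'' extreme directions can persist before being absorbed into $K$ or pushed strictly below $c$.
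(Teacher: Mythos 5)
Your Stage 1 is fine (distinct ergodic measures are mutually singular, hence the evaluation map $\phi \mapsto (\int\phi\,d\mu_1,\dots,\int\phi\,d\mu_n)$ is onto $\R^n$, so the equalizing $\phi_0$ exists, and $F_0 \equiv c$ on $K$ by affineness of $h$). The genuine gap is exactly where you flag it, and your proposed resolution does not close it. The finite-subcover argument in Stage 2 fails because $\mathcal{A}\setminus K$ is relatively open in the compact set $\mathcal{A}$ and therefore need not be compact: measures $\nu$ with $F_0(\nu)\ge c$ can accumulate on $K$, and no finite collection of neighborhoods covers them. Your fallback --- that ``only finitely many bad extreme directions can persist'' because $K$ is finite dimensional --- is unsupported and in general false: the set of equilibrium states of $\phi_0$ is a face of $M_T(X)$ that can be infinite dimensional with uncountably many ergodic extreme points (already for $\phi_0=0$ in a system with uncountably many measures of maximal entropy), so there is no finite list of competitors to absorb iteratively. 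Pointwise separation of each $\nu\notin K$ from $K$ does not assemble into a single continuous $\psi$ by compactness alone.

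The missing ingredient is a simplex-specific separation tool rather than plain Hahn--Banach. The key facts are: (i) since the $\mu_i$ are extreme points of the Choquet simplex $M_T(X)$, the finite-dimensional compact set $K=\operatorname{co}\{\mu_1,\dots,\mu_n\}$ is a \emph{face} (by uniqueness of the ergodic decomposition); (ii) for a closed face of a metrizable Choquet simplex, the function equal to $0$ on $K$ and $-1$ off $K$ is convex and upper semi-continuous (convexity uses the face property), so Edwards' separation theorem --- the same Theorem 4 of \cite{edwards} invoked in the proof of Theorem \ref{face freezing} --- produces a continuous affine $a\le 0$ vanishing on $K$; a countable exhaustion and summation over a dense sequence in $M_T(X)\setminus K$ then upgrades this to $a<0$ strictly off $K$, which is the $\psi$ you need, uniformly and without any compactness of $\mathcal{A}\setminus K$. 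Note also that the paper does not prove this statement; it quotes it as Corollary 3.17 of \cite{ruelle} (see also Theorem 5 of \cite{jenkinson}), whose proof runs through precisely this kind of tangent-functional/simplex-separation machinery rather than a covering argument.
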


This was extended by Jenkinson, who showed in \cite{jenkinson} that for any collection of ergodic measures $\mathcal{E}$ that is weak-$*$ closed in $M_T(X)$, there exists a potential $\phi \in C(X)$ such that $\overline{co}(\mathcal{E})$ is exactly the collection of equilibrium states for $\phi$, 
where $\overline{co}(\mathcal{E})$ represents the closed convex hull of $\mathcal{E}$. 

In the case where $h$ is not upper semi-continuous, substantially less is known. 
It is easy to show that if $\mu$ is an equilibrium state, then $h$ is upper semi-continuous at $\mu$, however it is not known if the converse is true. As such, a characterization of the properties required for a given ergodic measure to be an equilibrium state is still an open question.

\subsection{Zero Temperature Limits and Freezing Phase Transitions} 

For a given potential $\phi$ we can incorporate temperature into our analysis by examining the one-parameter family of potentials $\beta \phi$ for $\beta > 0$. Here $\beta$ represents the inverse temperature of our system and so by letting $\beta \nearrow \infty$, we observe the behavior of the system as it cools to zero-temperature. 

For each $\beta > 0$, let $\mu_\beta$ be an equilibrium state for $\beta \phi$. We say $\mu_\infty$ is a zero temperature limit (ground state) for $\phi$ if $(\mu_\beta)$ weak-$*$ converges to $\mu_\infty$ as $\beta \nearrow \infty$. In \cite{chazottes-hochman}, Chazottes and Hochman constructed a Lipschitz potential on the full shift $(\{0, 1\}^\Z, \sigma)$ whose zero-temperature limit does not exist. We let $Max(\phi) = \sup_{\mu \in M_T(X)} \int \phi d\mu$ be the maximal integral of $\phi$ against $T$-invariant measures, and define $h_\infty(\phi) = \sup \{ h(\mu) : \int \phi d\mu = Max(\phi) \}$ to be the supremum of the entropy of $\phi$-maximizing measures. In the case that the zero-temperature limit does exist, it is easy to show that $\int \phi d\mu_\infty = Max(\phi)$ and $h(\mu_\infty) = h_\infty(\phi)$. In this sense, zero-temperature limits are entropy maximizing among $\phi$-maximizing measures. 

We now define a modified pressure function for any fixed $\phi \in C(X)$. For $\beta \geq 0$, let $P_\phi(\beta) = P(\beta \phi)$. It can be shown that $P_\phi$ is convex and approaches a slant asymptote $f(\beta) = \beta \cdot Max(\phi) + h_\infty(\phi)$. In \cite{kucherenko-quas}, Kucherenko and Quas showed that for mixing $\Z$-SFTs (a large class of symbolic systems that will not be defined here) and any H\"older continuous potential $\phi$ that is not cohomologous to a constant, $P_\phi(\beta)$ approaches its slant asymptote at most exponentially quickly. Additionally, they were able to construct H\"older continuous potentials witnessing asymptotic behavior approaching the slant asymptote as slowly as desired. In these systems, the equilibrium states cool to their zero temperature limit at a relatively slow rate. This paper is concerned with the opposite phenomenon. 

The notion of a freezing phase transition has been explored in the literature, for example in  \cite{bruin-quasicrystal} and \cite{bruin-freezing}. For a given dynamical system $(X, T)$ and potential $\phi$, $\phi$ freezes at $\mu$ if there exists some $\beta_0 > 0$ such that for all $\beta \geq \beta_0$, $\mu$ is an equilibrium state for $\beta \phi$. It is not difficult to show that a potential $\phi$ freezes if and only if $P_\phi$ attains its slant asymptote for some $\beta_0 \geq 0$, and we provide a proof of the statement in Lemma \ref{freeze iff asymptote}. We note that when a potential freezes on a collection of equilibrium states, this collection is trivially a collection of zero-temperature limits for $\phi$. In particular, it must be the case that these measures are $\phi$-maximizing and entropy is constant among this collection. 

In \cite{hofbauer}, Hofbauer constructed a potential on the full shift $(\{ 0, 1\}^\Z, \sigma)$ that froze at the delta measure of the sequence of all $1$'s ($\delta_1$). Lopes extended this in \cite{lopes} and was able to construct a potential that froze on the convex hull of $\delta_1$ and $\delta_0$. In \cite{van-Enter}, van Enter and Mi\c{e}kisz asked if it was possible for a potential to freeze on a measure that is fully supported on a quasicrystal, which was explored by Bruin and Leplaideur in \cite{bruin-quasicrystal} and \cite{bruin-freezing}. Additionally, the construction in \cite{multiple-phase-transitions} allows one to find a potential on the full shift $(\{ 0, 1\}^\Z, \sigma)$ that freezes on the measure of maximal entropy for any subshift $X$. Outside of the subshift setting it was shown in \cite{freezing-Z} that when $T$ is a $\Z$-action, the entropy map $h$ is upper semi-continuous, and $(X, T)$ has finite topological entropy, then for any zero-entropy ergodic measure $\mu$, there exists a potential $\phi$ that freezes on $\mu$. 

\subsection{Our Results}

Our first result greatly extends and generalizes the affirmative answer to the question posed in \cite{van-Enter}, showing that for any dynamical system, any collection of equilibrium states where $h$ is constant can be obtained as freezing states for some potential. 


\begin{fact1}\label{face freezing} Let $(X, T)$ be any dynamical system and let $\mathcal{F}$ be a non-empty subset of $M_T(X)$. Then there there exists $\psi \in C(X)$ such that for all $\beta \geq 1$, $\mathcal{F}$ is the collection of equilibrium states for $\beta \psi$ if and only if the following two conditions hold: 
\begin{itemize}
\item For all $\mu, \nu \in \mathcal{F}$, $h(\mu) = h(\nu)$ and 
\item there exists some $\phi \in C(X)$ such that $\mathcal{F}$ is the collection of equilibrium states for $\phi$. 
\end{itemize}
\end{fact1}


In fact, the above conditions are necessary and sufficient for a collection of measures to be the freezing states for some potential. As an immediate corollary, we see that for a fixed ergodic measure, these properties are equivalent. 

\begin{cor1} Let $(X, T)$ be any dynamical system and let $\mu \in M_{erg}(X)$. Then $\mu$ is an equilibrium state for some potential if and only if $\mu$ is the freezing state for some potential. 
\end{cor1}

We then use Theorem \ref{face freezing} to prove a similar result to that of Jenkinson and Ruelle in the case where $h$ is upper semi-continuous. 

\begin{cor1}\label{usc case} Let $(X, T)$ be any dynamical system such that the entropy map is upper semi-continuous. Let $\mathcal{E} \subset M_{erg}(X)$ be a non-empty collection of ergodic measures so that $\mathcal{E}$ is closed in $M_T(X)$ and suppose that $h$ is constant on $\mathcal{E}$. Then there exists $\phi \in C(X)$ such that for all $\beta \geq 1$, $\overline{co}(\mathcal{E})$ is the collection of equilibrium states for $\beta \phi$. 
\end{cor1}

We note here that the hypotheses for Corollary \ref{usc case} are satisfied in many cases of interested, including where $(X, T)$ is a $d$-dimensional shift system over a finite alphabet, corresponding to lattice systems in statistical physics including the Ising and Potts models. 

Next, we turn to the structure of the set of freezing potentials in $C(X)$. It is well known that the set of potentials with unique equilibrium states is dense in $C(X)$ when $h$ is upper semi-continuous \cite{walters}, and we prove an analogous result:  

\begin{fact1} Let $(X, T)$ be a dynamical system such that the entropy map is upper semi-continuous. Then the set of potentials that freeze is dense in $C(X)$ with the uniform topology. 
\end{fact1}

However, in the case where $T$ is a $\Z$-action and $(X, T)$ has the specification property, even though the collection of potentials that freeze are dense, the complement is residual.

\begin{fact1} Let $(X, T)$ be a dynamical system where $T$ is a $\Z$ action and $(X, T)$ has the specification property. Then the collection of potentials that do not freeze contains a dense $G_\delta$. 
\end{fact1}

\setcounter{fact1}{0}

%
%

The structure of the paper is as follows. We begin with preliminaries in Section 2, introducing formal definitions for the relevant concepts. In Section 3 we prove the main results of this paper, and we conclude in Section 4 with a collection of observations we thought useful to include in this paper for additional context. These include that a potential inducing a non-trivial freezing state inherently induces a phase transition where $P_\phi$ is not real analytic at the freezing temperature, and showing that it is possible to construct a potential $\phi$ that exhibits a rapid phase transition, where the equilibrium states for $\beta \phi$ switch between a measure of maximal entropy to a zero-entropy measure at $\beta_0 = 1$.

\subsection*{Acknowledgements} This work was conducted during the author's PhD studies under the supervision of Prof. Ronnie Pavlov. The author would like to thank Prof. Pavlov for all of his assistance throughout the research and writing process. The author would also like to thank Prof. van Enter and Prof. Chazottes for their helpful suggestions.

\section{Preliminaries}

\subsection{Dynamical Systems} 

We fix a compact metric space $X$, and let $G$ be a topological (semi)-group acting on $X$ continuously by $\{ T_g \}_{g \in G}$. A {\bf dynamical system} is the pair $(X, T)$. We say a Borel measure $\mu$ on $X$ is {\bf$T$-invariant} if for all measurable $A \subset X$ and for all $g \in G$, we have $\mu(T_g^{-1} A ) = \mu(A)$. Let $M_T(X)$ denote the collection of all $T$-invariant Borel probability measures on $X$. For the remainder of this paper we assume that $M_T(X)$ is non-empty and some notion of measure theoretic entropy is defined, which is always the case when $G$ is an amenable group. This includes, for example, when the action is induced by $\R$, $\Z^d$. 

It is well known that, since $X$ is a compact metric space, $M_T(X)$ is weak-$*$ compact, convex, and in fact forms a Choquet simplex, where the extreme points of $M_T(X)$ are exactly the ergodic measures for the dynamical system $(X, T)$. We say an invariant measure $\mu$ is {\bf ergodic} when for any $A \subset X$, if for all $g \in G$, $\mu(T_g^{-1} A \triangle A) = 0$, then $\mu(A) \in \{ 0, 1 \}$. We denote the set of ergodic measures by $M_{erg}(X)$ (where $T$ is implicit). 

For a given invariant measure $\mu$, we denote the Kolmogorov-Sinai {\bf entropy of $\mu$} by $h(\mu)$ (see \cite{walters} for a detailed definition in the discrete case and \cite{flow-entropy} for $G = \R$). It is well known that $h: M_T(X) \rightarrow \R$ is non-negative and affine. We now define the {\bf topological entropy} of $(X, T)$ by $h(X) = \sup \{ h(\mu) : \mu \in M_T(X) \}$. 
For the remainder of this paper we assume that $h(X) < \infty$. We would like to note here that the results of this paper can be applied to any bounded, affine $h: M_T(X) \rightarrow \R$, including variations of the definition of measure theoretic entropy and generalizations of it.

We let $C(X)$ represent the collection of real-valued continuous functions on $X$ and we call any $\phi \in C(X)$ a {\bf potential}. When equipped with the supremum norm, $C(X)$ forms a Banach space and its (real) dual $C(X)^*$ corresponds with the signed Borel measures on $X$. We let $(C(X)^*, w^*)$ represent the dual of $C(X)$ endowed with the weak-$*$ topology. 

Note here that the dual of the topological vector space $(C(X)^*, w^*)$ is in direct correspondence with $C(X)$, where for each $F \in (C(X)^*, w^*)^*$, there exists some $f \in C(X)$ such that for all $\mu \in C(X)^*$, $F(\mu) = \int f d\mu$ (and of course vice versa). By a slight abuse of notation, we denote $(C(X)^*, w^*)^* $ by its representation in $C(X)$. Additionally, by a straightforward application of Hahn-Banach, for any $w^*$ continuous and affine $F: M_T(X) \rightarrow \R$, we can linearly extend $F$ to $(C(X)^*, w^*)$ and then represent $F$ by some $f \in C(X)$. This representation need not be unique, but for the purposes of this paper it is sufficient to note that for all affine, weak-$*$ continuous $F: M_T(X) \rightarrow \R$, there exists at least one $f \in C(X)$ such that for all $\mu \in M_T(X)$, $F(\mu) = \int f d\mu$. 

An essential element the techniques of this paper will be Theorem 4 from \cite{edwards}, known as Edward's Theorem. For our purposes the theorem can be stated as follows: we let $K$ be a Choquet simplex and let $-f, g : K \rightarrow (-\infty, \infty]$ be concave weak-$*$ lower semi-continuous  functions such that $f \leq g$. Then there exists a weak-$*$ continuous, affine $h: K \rightarrow \R$ such that $f \leq h \leq g$.


\subsection{Pressure and Equilibrium States} 
When viewing potentials as energy functions, we can extend the idea of entropy to that of pressure, an analogue of the concept of free energy. For a potential $\phi \in C(X)$, the {\bf pressure} of $\phi$ can be defined via the variational principle: 
$$P(\phi) = \sup \{ h(\mu) + \int \phi d\mu : \mu \in M_T(X) \}. $$

Equilibrium states are defined as probability distributions that maximize the pressure of the system. In particular, we say for a given $\mu \in M_T(X)$, {\bf $\mu$ is an equilibrium state for $\phi$} if it attains the supremum: 
$$P(\phi) =  h(\mu) + \int \phi d\mu. $$
For a fixed potential $\phi$, let $\mathcal{F}$ be the collection of equilibrium states for $\phi$. In the case where $h$ is upper semi-continuous, it can be easily shown that $\mathcal{F}$ is non-empty, compact, and convex. In particular, it is an exposed face of $M_T(X)$ and the extreme points of $\mathcal{F}$ are exactly the ergodic equilibrium states. When $h$ is not upper semi-continuous, it is not necessarily the case that $\mathcal{F}$ is non-empty or closed. $\mathcal{F}$ is still convex in this general case, and it is easy to show that the extreme points are exactly the ergodic equilibrium states, but beyond this, little else is known about $\mathcal{F}$ in general.

%

We provide here two simple examples showing that the collection of equilibrium states for the potential $\phi = 0$ need not be closed, and may be empty. First we let $\N^*$ denote the one point compactification of $\N$ (i.e. $\N^* = \N \cup \{ \infty \}$). We do not provide a proof for the statements in the examples for the sake of space. 

\begin{ex} Let $X \subset ( \N^*)^\Z$ be the countable state Markov chain such that for all $x \in X$, for all $n \in \Z$, $|x_{n+1} - x_n| \leq 1$. Under the left shift map, $(X, \sigma)$ is a dynamical system with topological entropy $\log 3$. However, for any $\sigma$-invariant measure $\mu \in M_T(X)$, $h(\mu) < \log 3$. It is therefore the case that the set of equilibrium states for the potential $\phi = 0$ is empty. 
\end{ex}

\begin{ex}\label{disjoint subshift} Let $X \subset (\N^*)^\Z$ be the disjoint union of the full shifts on $\{ 2n, 2n+1 \}$ unioned with the fixed point $\infty^\Z$. Clearly the entropy of this Markov chain is $\log 2$, and the collection of equilibrium states for $\phi = 0$ is exactly the convex hull of the measures of maximal entropy supported on each disjoint full shift. However, this collection is not closed in $M_T(X)$ since $\delta_\infty$ is in the closure yet $h(\delta_\infty) = 0$. 
\end{ex}

We note here that Example \ref{disjoint subshift} could be altered so that $X$ is topologically mixing by using a construction similar to Haydn in \cite{haydn}, but save the details for the interested reader.

\subsection{Zero Temperature Limits and Freezing Behavior}

We let $\beta > 0$ represent the inverse temperature of our system. We can examine zero-temperature limits of a potential $\phi$ by looking at equilibrium states for $\beta \phi$ as we let $\beta \nearrow \infty$. We say $\mu_\infty$ is a {\bf zero-temperature limit} of $\phi$ if there exists a collection of equilibrium states $\mu_\beta$ for $\beta \phi$ such that $\mu_\beta \rightarrow \mu_\infty$ as $\beta \nearrow \infty$. It is easy to show that the collection of zero-temperature limits are entropy maximizing among the $\phi$-maximizing invariant measures. We say that a potential {\bf $\phi$ freezes at $\mu$} if there exists some $\beta_0 > 0$ such that for all $\beta \geq \beta_0$, $\mu$ is an equilibrium state for $\beta\phi $.

\section{Proof of Results}

\subsection{Equilibrium States are Freezing States}

We begin by identifying necessary conditions required for a potential $\phi$ to freeze at a set of invariant measures $\mathcal{F}$.  

\begin{lemma}\label{beta eq states} Let $(X, T)$ be any dynamical system. Let $\phi \in C(X)$ and let $\mathcal{F}$ be a non-empty subset of $M_T(X)$. Then for all $\beta \geq 1$, $\mathcal{F}$ is the collection of equilibrium states for $\beta \phi$ if and only if the following conditions hold: 
\begin{enumerate}
\item $\mathcal{F}$ is the collection of equilibrium states for $\phi$, 
\item for all $\mu \in \mathcal{F}$, $\int \phi d\mu = \sup_{\nu \in M_T(X)} \int \phi d\nu$. 
\end{enumerate}
\end{lemma}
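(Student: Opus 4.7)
The plan is to verify both directions directly from the variational principle, using that the topological entropy bound $h(X)<\infty$ forces a linear-in-$\beta$ contribution from $\int \phi\, d\nu$ to dominate any bounded entropy difference.

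For the forward direction, assume $\mathcal{F}$ is the collection of equilibrium states for $\beta\phi$ for every $\beta\geq 1$. Taking $\beta=1$ immediately gives condition (1). For condition (2), fix $\mu\in\mathcal{F}$ and any $\nu\in M_T(X)$; since $\mu$ is an equilibrium state for $\beta\phi$ while $\nu$ is merely an invariant measure, the variational principle yields
$$h(\mu)+\beta\int\phi\, d\mu \;\geq\; h(\nu)+\beta\int\phi\, d\nu,$$
which rearranges to $h(\mu)-h(\nu)\geq \beta\bigl(\int\phi\, d\nu-\int\phi\, d\mu\bigr)$. The left side is bounded above by $h(X)<\infty$ independently of $\beta$, so letting $\beta\nearrow\infty$ forces $\int\phi\, d\nu\leq\int\phi\, d\mu$, and since $\nu$ was arbitrary and $\mu\in M_T(X)$, we get $\int\phi\, d\mu=\sup_{\nu\in M_T(X)}\int\phi\, d\nu$.

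For the reverse direction, assume (1) and (2), set $M=\sup_{\nu\in M_T(X)}\int\phi\, d\nu$, and fix $\beta\geq 1$ and $\mu\in\mathcal{F}$. For any $\nu\in M_T(X)$, write
$$h(\nu)+\beta\int\phi\, d\nu \;=\; \bigl(h(\nu)+\int\phi\, d\nu\bigr) + (\beta-1)\int\phi\, d\nu.$$
By (1) the first bracket is at most $P(\phi)=h(\mu)+M$, and since $\beta-1\geq 0$ and $\int\phi\, d\nu\leq M$, the second term is at most $(\beta-1)M$. Adding gives $h(\nu)+\beta\int\phi\, d\nu\leq h(\mu)+\beta M$, with equality attained at $\mu$ by (2), so $P(\beta\phi)=h(\mu)+\beta M$ and every element of $\mathcal{F}$ is an equilibrium state for $\beta\phi$. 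Conversely, any $\nu$ achieving equality must saturate both brackets: when $\beta>1$, the second bracket forces $\int\phi\, d\nu=M$, and then the first forces $h(\nu)+\int\phi\, d\nu=P(\phi)$, so $\nu\in\mathcal{F}$; when $\beta=1$ the conclusion is just condition (1) again.

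There is no serious obstacle here; the only conceptual point worth flagging is the use of $h(X)<\infty$ in the forward direction, which is exactly where the global hypothesis on finite topological entropy from the preliminaries enters. Everything else is bookkeeping with the variational principle.
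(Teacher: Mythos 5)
Your proof is correct and follows essentially the same route as the paper: the reverse direction uses the identical decomposition $h(\nu)+\beta\int\phi\,d\nu=(h(\nu)+\int\phi\,d\nu)+(\beta-1)\int\phi\,d\nu$, and the forward direction rests on the same mechanism (finite topological entropy versus linear growth in $\beta$), which you run as a direct limit $\beta\nearrow\infty$ where the paper argues by contradiction with one sufficiently large $\beta$.
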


\begin{proof} Let $c \in \R$ and notice that for any $\beta > 0$, $\mu$ is an equilibrium state for $\beta \phi$ if and only if it is an equilibrium state for $\beta \left( \phi + c \right)$. We may therefore assume without loss of generality that $\sup_{\nu \in M_T(X)} \int \phi d\nu = 0$. 

We now suppose (1) and (2) hold. Since by assumption for any $\mu, \nu \in \mathcal{F}$, $\int \phi d\mu = \int \phi d\nu$ and $\mu$ and $\nu$ are both equilibrium states for $\phi$, it immediately follows that $h$ is constant on $\mathcal{F}$. 

We now let $\beta \geq 1$, $\nu \in M_T(X) \backslash \mathcal{F}$, and $\mu \in \mathcal{F}$. Notice here by assumption 
$$h(\nu) + \int \beta \phi d\nu = h(\nu) + \int \phi d\nu + (\beta - 1) \int \phi d\nu$$
$$\leq  h(\nu) + \int \phi d\nu < h(\mu) + \int \phi d\mu = h(\mu) + \int \beta \phi d\mu. $$
It immediately follows that the collection of equilibrium states for $\beta \phi$ is contained in $\mathcal{F}$. 
However, for any $\mu, \nu \in \mathcal{F}$ we know that
$$h(\nu) + \int \beta \phi d\nu = h(\nu) = h(\mu) = h(\mu) + \int \beta \phi d\mu. $$
We can now conclude that $\mathcal{F}$ is exactly the collection of equilibrium states for $\beta \phi$. 

$\;$

We now suppose that for all $\beta \geq 1$, $\mathcal{F}$ is the collection of equilibrium states for $\beta \phi$. Trivially by taking $\beta = 1$, condition (1) holds. We now suppose that (2) does not hold. In particular, we have some $\mu \in \mathcal{F}$ such that $\int \phi d\mu < 0 = \sup_{\nu \in M_T(X)} \int \phi d\nu$. Fix this $\mu$ and let $\beta > 1$ be sufficiently large such that $\beta \int \phi d\mu < - h(X)$. We therefore know that 
$$h(\mu) + \int \beta \phi d\mu < 0 \leq h(\nu) +  \int \beta \phi d\nu $$
for any $\nu$ satisfying $\int \phi d\nu = 0$. We therefore know $\mu$ is not an equilibrium state for $\beta \phi$, arriving at a contradiction.


\end{proof}

%
%
%
%

We may now prove our first result: 


\begin{fact1}\label{face freezing} Let $(X, T)$ be any dynamical system and let $\mathcal{F}$ be a non-empty subset of $M_T(X)$. Then there there exists $\psi \in C(X)$ such that for all $\beta \geq 1$, $\mathcal{F}$ is the collection of equilibrium states for $\beta \psi$ if and only if the following two conditions hold: 
\begin{itemize}
\item For all $\mu, \nu \in \mathcal{F}$, $h(\mu) = h(\nu)$ and 
\item there exists some $\phi \in C(X)$ such that $\mathcal{F}$ is the collection of equilibrium states for $\phi$. 
\end{itemize}
\end{fact1}

\begin{proof} First suppose that $\mathcal{F}$ is the collection of equilibrium states for $\beta \psi$ for all $\beta \geq 1$. Note trivially that $\mathcal{F}$ is the collection of equilibrium states for $\phi = 2 \psi$. Additionally, we note that every $\mu \in \mathcal{F}$ is a ground state for $\psi$, and therefore it must be the case that $h(\mu) = h_\infty(\psi)$. It immediately follows that $h$ must be constant on $\mathcal{F}$.

We suppose that $h$ is constant on $\mathcal{F}$ and there exists some $\phi \in C(X)$ such that $\mathcal{F}$ is the collection of equilibrium states for $\phi$. Note here that for all $\nu, \mu \in \mathcal{F}$, 
$$h(\nu) + \int \phi d\nu = h(\mu) + \int \phi d\mu. $$
Since by assumption $h(\nu) = h(\mu)$, we may assume without loss of generality by adding a scalar that for all $\mu \in \mathcal{F}$, $\int \phi d\mu = 0$. We now define the following functions $f, g : M_T(X) \rightarrow \R$ such that 
\begin{itemize}
\item For all $\nu \in M_T(X)$, $f(\nu) = | \int \phi d\nu |$ and 
\item $g|_{\overline{\mathcal{F}}} = 0$ and $g|_{M_T(X) \backslash \overline{\mathcal{F}} } = ||\phi||$. 
\end{itemize}
It is easy to see that $f$ is convex and weak-$*$ continuous. Additionally, since $\mathcal{F}$ is convex, $\overline{\mathcal{F}}$ is a closed and convex set and therefore $g$ is concave and weak-$*$ lower semi-continuous. By construction we know $f \leq g$ and since $M_T(X)$ is a Choquet simplex, we can apply Theorem 4 from \cite{edwards} to find a weak-$*$ continuous affine $F: M_T(X) \rightarrow \R$ such that $f \leq F \leq g$. In particular, $F$ can be identified with some $F \in C(X)$.

We let $\psi = \phi - F$. Let $\mu \in \mathcal{F}$ and examine: 
$$\int \psi d\mu = \int \phi d\mu - \int F  d\mu = 0. $$
Additionally, for all $\nu \in M_T(X) \backslash \mathcal{F}$, we have 
$$\int \psi d\nu = \int \phi  d\nu - \int F  d\nu \leq \int \phi d\nu - |\int \phi d\nu| \leq 0. $$

Thus condition (2) of Lemma \ref{beta eq states} holds and it remains to show that $\mathcal{F}$ is the collection of equilibrium states for $\psi$. We now let $\mu \in \mathcal{F}$ and $\nu \in M_T(X) \backslash \mathcal{F}$ and examine 
$$h(\nu) + \int \psi d\nu = h(\nu) + \int \phi d\nu - \int  Fd\nu  u $$
$$< h(\mu) + \int \phi d\mu - \int F d\nu  \leq h(\mu) + \int \phi d\mu- \int F d\mu = h(\mu) + \int \psi d\mu.  $$
It therefore follows that $\mathcal{F}$ is the collection of equilibrium states for $\psi$. We can now apply Lemma \ref{beta eq states} to see that for all $\beta \geq 1$,  $\mathcal{F}$ is the collection of equilibrium states for $\beta \psi$. 

\end{proof}

As immediate corollaries, we obtain results in the case where $\mathcal{F}$ is a single ergodic measure, as well as a result in the case where $h$ is upper semi-continuous.  

\begin{cor1} Let $\mu \in M_{erg}(X)$. Then $\mu$ is an equilibrium state for some potential if and only if $\mu$ is the freezing state for some potential. 
\end{cor1}


\begin{cor1}\label{usc case} Let $(X, T)$ be any dynamical system such that the entropy map is upper semi-continuous. Let $\mathcal{E} \subset M_{erg}(X)$ be a non-empty collection of ergodic measures so that $\mathcal{E}$ is closed in $M_T(X)$ and suppose that $h$ is constant on $\mathcal{E}$. Then there exists $\phi \in C(X)$ such that for all $\beta \geq 1$, $\overline{co}(\mathcal{E})$ is the collection of equilibrium states for $\beta \phi$. 
\end{cor1}

\begin{proof} This follows immediately by direct application of Theorem 5 from \cite{jenkinson} and Theorem \ref{face freezing} above. 
\end{proof}

%
%

\subsection{Density in $C(X)$}

We will now show that the collection of potentials that freeze at a singleton is dense in $C(X)$ with the uniform topology. 

\begin{fact1} Let $(X, T)$ be a dynamical system such that the entropy map is upper semi-continuous. Then the set of potentials that freeze is dense in $C(X)$ with the uniform topology. 
\end{fact1}

\begin{proof} Let $\phi \in C(X)$ and let $\mu_\infty$ be any ergodic measure that is $\phi$-maximizing. Since $\mu_\infty$ is ergodic and $h$ is upper semi-continuous, by Corollary \ref{usc case} there must be a potential $\psi$ that freezes on $\{ \mu_\infty \}$. By adding a constant we may assume without loss of generality that $\int \psi d\mu_\infty = 0$. We claim that for any $\epsilon > 0$, $\phi + \epsilon \psi$ freezes on $\mu_\infty$. 

First we fix $\beta_0 > 0$ such that $\psi$ freezes on $\mu_\infty$ at $\beta_0$. Let $\nu \neq \mu_\infty$ and $\beta > \beta_0$ and examine 
$$h(\nu) + \int  \frac{\beta}{\epsilon}  \left( \phi + \epsilon \psi \right) d\nu = h(\nu) + \int \beta \psi d\nu +  \int \frac{\beta}{\epsilon}  \phi d\nu $$
$$< h(\mu_\infty) + \int \beta \psi d\mu_\infty + \int \frac{\beta}{\epsilon}  \phi d\nu \leq h(\mu_\infty) + \int \beta \psi d\mu_\infty + \int \frac{\beta}{\epsilon}  \phi d\mu_\infty . $$
It immediately follows that $\phi + \epsilon \psi$ freezes on $ \{ \mu_\infty \}$ (at $\beta_0 / \epsilon$), and we can conclude the desired results. 
\end{proof}


\subsection{Non-Freezing Potentials are Generic under Specification} 

Finally, we show that although the collection of potentials that freeze are dense whenever $h$ is upper semi-continuous, under specification the complement is residual. 

\begin{fact1}\label{generic} Let $(X, T)$ be a dynamical system where $T$ is a $\Z$ action and $(X, T)$ has the specification property. Then the collection of potentials that do not freeze contains a dense $G_\delta$. 
\end{fact1}

\begin{proof} For a given $\phi \in C(X)$, let $Max(\phi) = \sup \{ \int \phi d\mu : \mu \in M_T(X) \}$. Note that $Max ( \cdot ) : C(X) \rightarrow \R$ is continuous (in fact it is Lipschitz). It therefore follows for all $\beta > 0$, 
$$A_\beta = \{ \phi \in C(X) : P_{top}(\beta \phi) > Max(\beta \phi) \}$$
is open in $C(X)$. Additionally, by continuity of $P_{top}$ and $Max(\cdot)$, we know 
$$A = \bigcap_{\beta > 0} A_\beta = \bigcap_{\beta \in \Q^+} A_\beta. $$
Thus we know 
$$A = \{ \phi \in C(X) : \forall \beta > 0, P_{top}(\beta \phi) > Max( \beta \phi ) \} $$
is a $G_\delta$ set in $C(X)$. 

We now note that since $(X, T)$ is a dynamical system with specification, a result by Bowen in \cite{bowen} implies that the collection of H\o lder potentials is contained in $A$ and it can be shown that this is dense in $C(X)$. 

For a potential $\phi$, let $h_\infty(\phi)$ denote the residual entropy of $\phi$. In particular, it is the maximal entropy of $\phi$-maximizing measures and is obtained by any cluster point of any sequence $(\mu_\beta)$ where $\mu_\beta$ is an equilibrium state for $\beta \phi$. We now define: 
$$B = \{ \phi \in C(X) : h_\infty(\phi) = 0 \}. $$
Again since $(X, T)$ has specification, we can apply Corollary 1.3 of \cite{morris-residual-entropy} to see that $B$ contains a dense $G_\delta$. 

It follows immediately that for any $\phi \in A \cap B$, $P_\phi$ does not obtain its slant asymptote and by Lemma \ref{freeze iff asymptote} it follows that $\phi$ does not freeze and we can conclude our desired result. 
\end{proof}

\section{Additional Observations regarding Phase Transitions}

We conclude this paper by proving a few additional observations regarding phase transitions when a potential $\phi$ freezes at some $\beta_0 > 0$. 

\subsection{Freezing and Analyticity of the Pressure Function} The first lemma is known in the literature (for example \cite{multiple-phase-transitions}), but we provide a proof here. 

\begin{lemma}\label{P derivative} Fix $\phi \in C(X)$ and suppose $P_\phi$ is differentiable at $\beta_0$ and that there exists at least one equilibrium state for $\beta_0 \phi$. Then the collection of all equilibrium states for $\beta_0 \phi$ satisfy the following: 
$$\int \phi d\mu = \frac{\partial P_\phi}{\partial \beta}(\beta_0). $$
\end{lemma}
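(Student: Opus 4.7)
The plan is to exploit the convexity of $P_\phi$ as a function of $\beta$ and recognize $\int \phi d\mu$ as a subgradient at $\beta_0$. Since $P_\phi(\beta) = \sup\{h(\nu) + \beta \int \phi d\nu : \nu \in M_T(X)\}$ is a supremum of affine functions of $\beta$, it is automatically convex, so one-sided derivatives exist and the collection of subgradients at any point forms a closed interval.

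The main step would be to show that for any equilibrium state $\mu$ for $\beta_0 \phi$, the value $\int \phi d\mu$ lies in the subdifferential $\partial P_\phi(\beta_0)$. Specifically, for every $\beta \in \R$, one has
\[
P_\phi(\beta) \geq h(\mu) + \beta \int \phi d\mu = \left(h(\mu) + \beta_0 \int \phi d\mu\right) + (\beta - \beta_0) \int \phi d\mu = P_\phi(\beta_0) + (\beta - \beta_0) \int \phi d\mu,
\]
where the first inequality is the variational principle applied to $\beta \phi$ with the test measure $\mu$, and the second equality uses that $\mu$ is an equilibrium state for $\beta_0 \phi$. This produces a supporting affine function with slope $\int \phi d\mu$ at $\beta_0$.

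To finish, I would invoke the assumption that $P_\phi$ is differentiable at $\beta_0$. For a convex function on $\R$, differentiability at a point forces the subdifferential there to be the singleton $\{P_\phi'(\beta_0)\}$. Since $\int \phi d\mu$ lies in this subdifferential for every equilibrium state $\mu$, we conclude $\int \phi d\mu = P_\phi'(\beta_0)$ uniformly across all equilibrium states for $\beta_0 \phi$.

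There is no serious obstacle here. The only subtlety is confirming that the subdifferential coincides with the set of slopes of supporting affine functions to the epigraph and that differentiability of a convex function on the real line collapses this set to a point; these are standard facts from one-dimensional convex analysis. The hypothesis that an equilibrium state exists at $\beta_0$ is exactly what is needed to realize $\int \phi d\mu$ as an actual slope (rather than merely as a limit).
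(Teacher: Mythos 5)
Your proof is correct and takes essentially the same approach as the paper: both hinge on the variational principle giving $P_\phi(\beta) \geq h(\mu) + \beta\int\phi\,d\mu$ with equality at $\beta_0$, i.e.\ a supporting line of slope $\int\phi\,d\mu$, which differentiability then forces to equal $P_\phi'(\beta_0)$. If anything, your subdifferential phrasing records both one-sided bounds explicitly, whereas the paper's proof only writes out the inequality for the right-hand difference quotient.
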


\begin{proof} We suppose that $P_\phi$ is differentiable at $\beta_0$ and we let $\mu$ be an equilibrium state for $\beta_0 \phi$. We now let $\beta > \beta_0$ and note that 
$$P_\phi(\beta) \geq h(\mu) + \int  \beta \phi d\mu. $$
We therefore have 
$$\lim_{\beta \searrow \beta_0} \frac{P_\phi(\beta) - P_\phi(\beta_0)}{\beta - \beta_0} \geq \int \phi d\mu. $$
\end{proof}

%
%
%
%
%
%
%
%

\begin{lemma}\label{freeze iff asymptote}  $\phi$ freezes at $\beta_0$ if and only if $P_\phi$ attains its slant asymptote at $\beta_0$ and there exists an equilibrium state for $\beta \phi$ for some $\beta > \beta_0$. 
\end{lemma}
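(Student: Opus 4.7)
The plan is to handle the two directions separately. The forward direction reduces to the observation that any freezing state is, by definition, a trivial zero-temperature limit; the backward direction requires a convexity argument to propagate equality from $\beta_0$ along the whole asymptote and then a saturation argument to identify a freezing state.

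For the forward direction, I would let $\mu$ be a measure witnessing the freezing at $\beta_0$, i.e., $\mu$ is an equilibrium state for $\beta \phi$ for every $\beta \geq \beta_0$. Taking the constant sequence $\mu_\beta = \mu$ for $\beta \to \infty$ shows $\mu$ is a zero-temperature limit, so as noted earlier in the paper $\int \phi\, d\mu = Max(\phi)$ and $h(\mu) = h_\infty(\phi)$. Plugging in at $\beta_0$ gives $P_\phi(\beta_0) = h(\mu) + \beta_0 \int \phi \, d\mu = h_\infty(\phi) + \beta_0 Max(\phi) = f(\beta_0)$, so $P_\phi$ attains its slant asymptote there; and $\mu$ itself is an equilibrium state for any $\beta > \beta_0$.

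For the backward direction, assume $P_\phi(\beta_0) = f(\beta_0)$ and fix an equilibrium state $\mu$ for $\beta_1 \phi$ with $\beta_1 > \beta_0$. I would proceed in two steps. First, by taking a sequence of $\phi$-maximizing measures $\nu_n$ with $h(\nu_n) \to h_\infty(\phi)$ and testing them in the variational principle, conclude $P_\phi(\beta) \geq f(\beta)$ for every $\beta \geq 0$. Since $f$ is affine, the function $g := P_\phi - f$ is convex, nonnegative, vanishes at $\beta_0$, and tends to $0$ at $\infty$ (as $P_\phi$ approaches the slant asymptote). A convex nonnegative function that is zero at $\beta_0$ and has positive value at some $\beta' > \beta_0$ would, by the secant-line bound $g(\beta) \geq \frac{\beta - \beta_0}{\beta' - \beta_0} g(\beta')$ for $\beta > \beta'$, be bounded below by a ray of positive slope, contradicting $g \to 0$. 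Hence $g \equiv 0$ on $[\beta_0, \infty)$, i.e., $P_\phi(\beta) = f(\beta)$ throughout this ray.

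Second, use $\mu$ to pin down the behavior: since $\mu$ is an equilibrium state for $\beta_1 \phi$, $P_\phi(\beta_1) = h(\mu) + \beta_1 \int \phi\, d\mu$; the variational inequality at $\beta_0$ gives $P_\phi(\beta_0) \geq h(\mu) + \beta_0 \int \phi\, d\mu$. Subtracting yields $(\beta_1 - \beta_0)\int \phi\, d\mu \geq P_\phi(\beta_1) - P_\phi(\beta_0) = (\beta_1 - \beta_0) Max(\phi)$, forcing $\int \phi\, d\mu = Max(\phi)$ and then $h(\mu) = h_\infty(\phi)$. For any $\beta \geq \beta_0$ we then get $h(\mu) + \beta \int \phi\, d\mu = h_\infty(\phi) + \beta Max(\phi) = f(\beta) = P_\phi(\beta)$, so $\mu$ is an equilibrium state for $\beta\phi$ throughout $[\beta_0, \infty)$, which is precisely freezing at $\beta_0$.

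The main obstacle is the convexity-plus-asymptote step, which is standard but is the only place the hypothesis about $P_\phi$ attaining its asymptote is really leveraged. Once that gives $P_\phi \equiv f$ on $[\beta_0, \infty)$, the identification of $\mu$ as a freezing state from the existence of any equilibrium state beyond $\beta_0$ is a routine one-line arithmetic consequence.
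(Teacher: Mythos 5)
Your proof is correct, and its overall architecture matches the paper's: the forward direction via the observation that a freezing state is a zero-temperature limit, and the backward direction by first propagating $P_\phi = f$ along all of $[\beta_0,\infty)$ by convexity and then showing the given equilibrium state at $\beta_1 > \beta_0$ must be $\phi$-maximizing with entropy $h_\infty(\phi)$. The one substantive difference is in how you extract $\int \phi\, d\mu = Max(\phi)$: the paper notes that $P_\phi$ is affine, hence differentiable with derivative $Max(\phi)$, on $(\beta_0,\infty)$ and invokes Lemma \ref{P derivative}; you instead subtract the variational inequality at $\beta_0$ from the equality at $\beta_1$ to get $(\beta_1-\beta_0)\int\phi\,d\mu \geq (\beta_1-\beta_0)Max(\phi)$ directly. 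Your route is self-contained and arguably cleaner, since it bypasses the auxiliary differentiability lemma entirely (and in effect re-proves the only half of it that is needed); you also spell out the secant-line argument that the paper dismisses as a ``straightforward convexity argument,'' including the needed preliminary bound $P_\phi \geq f$ obtained by testing near-entropy-maximal $\phi$-maximizing measures. Both approaches deliver the same conclusion; the paper's is shorter on the page because it delegates to Lemma \ref{P derivative}, while yours is more elementary and fully explicit.
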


\begin{proof} First suppose $\phi$ freezes at $\beta_0$ and let $\mu$ be an equilibrium state for $\beta \phi$ for all $\beta \geq \beta_0$. It immediately follows that $\mu$ is a zero-temperature limit for $\phi$ and additionally for all $\beta \geq \beta_0$, 
$$P_\phi(\beta) = h(\mu) + \beta \int \phi d\mu. $$

We now let $Max(\phi) = \sup_{\mu \in M_T(X)} \int \phi d\mu$ and $h_\infty(\phi) = \sup \{ h(\mu) : \int \phi d\mu = Max(\phi) \}$. As mentioned in the introduction, the slant asymptote for $P_\phi$ is exactly $f(\beta) = \beta \cdot Max(\phi) + h_\infty (\phi)$. Suppose now there exists some $\beta_0 > 0$ such that $P_\phi$ attains this asymptote. By a straight forward convexity argument, it must be the case that for all $\beta \geq \beta_0$, $P_\phi(\beta) = h_\infty(\phi) + \beta Max(\phi)$, and it is therefore differentiable on $(\beta_0, \infty)$ with derivative $Max(\phi)$. 

Let $\mu$ be an equilibrium state for $\beta \phi$ for some $\beta > \beta_0$, which exists by assumption. Since $\beta > \beta_0$, we know $P_\phi$ is differentiable at $\beta$ with derivative $Max(\phi)$, and so by Lemma \ref{P derivative} we know $\int \phi d\mu = Max(\phi)$. It follows that for all $\beta \geq \beta_0$, $P_\phi(\beta) = h(\mu) + \int \beta \phi d\mu$ and thus $\phi$ freezes at $\beta_0$. 
\end{proof}

Finally we show that if $\beta_0 > 0$ is the smallest inverse temperature at which $\phi$ freezes, the system exhibits a phase transition, where $P_\phi$ is not analytic at $\beta_0$. 

\begin{obs} Let $\phi$ freeze at $\beta_0 > 0$ and for all $\alpha \in [0, \beta_0)$, $\phi$ does not freeze at $\alpha$. Then $\phi$ exhibits a phase transition at $\beta_0$. 
\end{obs}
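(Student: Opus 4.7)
The plan is to argue by contradiction, assuming that $P_\phi$ is real-analytic at $\beta_0$ and then using the rigidity of analytic functions (identity theorem) to force $\phi$ to freeze at some smaller inverse temperature, contradicting the minimality of $\beta_0$.

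First I would recall the consequences of the hypothesis. Since $\phi$ freezes at $\beta_0$, Lemma \ref{freeze iff asymptote} gives that $P_\phi$ attains its slant asymptote at $\beta_0$, and the convexity argument from the proof of that lemma actually yields
\[
P_\phi(\beta) = h_\infty(\phi) + \beta \cdot Max(\phi) \qquad \text{for all } \beta \geq \beta_0.
\]
In particular, $P_\phi$ agrees with the affine function $f(\beta) = h_\infty(\phi) + \beta \cdot Max(\phi)$ on the entire half-line $[\beta_0, \infty)$.

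Next I would suppose for contradiction that $P_\phi$ is real-analytic on some open interval $(\beta_0 - \epsilon, \beta_0 + \epsilon)$ with $\epsilon < \beta_0$ (so that $\beta_0 - \epsilon > 0$). Since $P_\phi - f$ is real-analytic on this interval and vanishes identically on $[\beta_0, \beta_0 + \epsilon)$, the identity theorem for real-analytic functions forces $P_\phi = f$ on the whole interval $(\beta_0 - \epsilon, \beta_0 + \epsilon)$. Thus $P_\phi$ attains its slant asymptote at every $\alpha \in (\beta_0 - \epsilon, \beta_0)$.

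Finally I would apply Lemma \ref{freeze iff asymptote} in the other direction. Fix any such $\alpha \in (\beta_0 - \epsilon, \beta_0) \subset [0, \beta_0)$. Since $\phi$ freezes at $\beta_0$, there exists an equilibrium state for $\beta_0 \phi$, and $\beta_0 > \alpha$. Combined with $P_\phi$ attaining its slant asymptote at $\alpha$, Lemma \ref{freeze iff asymptote} yields that $\phi$ freezes at $\alpha$, contradicting the hypothesis that $\phi$ does not freeze at any $\alpha \in [0, \beta_0)$. This completes the argument. No step looks like a substantive obstacle; the observation is essentially a clean packaging of the identity theorem together with Lemma \ref{freeze iff asymptote}, so the only care needed is in choosing $\epsilon$ small enough to stay in $(0, \beta_0)$ and in invoking the existence of an equilibrium state at $\beta_0$ that is guaranteed by the freezing hypothesis itself.
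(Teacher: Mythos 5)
Your proposal is correct and follows essentially the same route as the paper: assume analyticity at $\beta_0$, use the rigidity of real-analytic functions (the paper computes the Taylor coefficients explicitly rather than citing the identity theorem, but this is the same idea) to propagate the identity $P_\phi(\beta) = \beta \cdot Max(\phi) + h_\infty(\phi)$ to some interval $(\beta_0 - \epsilon, \beta_0]$, and contradict minimality. If anything, you are slightly more careful than the paper in the last step, where you explicitly invoke Lemma \ref{freeze iff asymptote} (with the equilibrium state at $\beta_0$ supplying its second hypothesis) to convert ``attains the slant asymptote at $\alpha < \beta_0$'' into ``freezes at $\alpha$''; the paper leaves that conversion implicit.
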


\begin{proof} Let $\beta \cdot Max(\phi)+h_\infty(\phi)$ be the slant asymptote for $P_\phi$ and notice for all $\beta \geq \beta_0$, $P_\phi(\beta) = \beta \cdot Max(\phi)+h_\infty(\phi)$. Suppose for a contradiction that $P_\phi$ is analytic at $\beta_0$. Thus we have some $\epsilon > 0$ and a sequence of real numbers $(a_n)$ such that for all $ \beta \in (\beta_0 - \epsilon, \beta_0 + \epsilon)$,  
$$P_\phi(\beta) = \sum_{n=0}^\infty a_n( \beta - \beta_0)^n. $$
Since for all $\beta \in (\beta_0, \beta_0+\epsilon)$, we know $P_\phi(\beta) = \beta \cdot Max(\phi)+h_\infty(\phi)$, we know the sequence $(a_n)$ must be exactly $a_0 = h_\infty(\phi)$, $a_1 = Max(\phi)$ and $a_n = 0$ for all $n \geq 2$. It immediately follows that for $\beta \in (\beta_0 - \epsilon, \beta_0]$, $P_\phi(\beta) = \beta \cdot Max(\phi)+h_\infty(\phi)$. This contradicts the minimality of $\beta_0$, and thus $P_\phi$ is not analytic at $\beta_0$. 
\end{proof}


\subsection{Rapid Switching Behavior}

We conclude by showing that we can obtain a rapid phase transition from entropy-dominated, to energy-dominated regimes at a specific inverse temperature. In particular, the equilibrium states shift from having maximal entropy to minimal entropy at a specific $\beta_0$. We begin with an easy to prove observation.

\begin{lemma}\label{freezing subset} Let $\phi \in C(X)$ and let $\mu \in M_{erg}(X)$ be an equilibrium state for $\phi$ such that $h(\mu) = 0$. Then for all $\beta \geq 1$, $\mu$ is an equilibrium state for $\beta \phi$. 
\end{lemma}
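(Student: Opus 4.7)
The plan is to mimic the forward direction of Lemma \ref{beta eq states}, adapted to the single-measure setting. The hypothesis $h(\mu) = 0$ does all of the work: it forces $\mu$ to be $\phi$-maximizing for free. More precisely, I would first observe that since $\mu$ is an equilibrium state for $\phi$ and $h(\mu) = 0$, we have $P(\phi) = \int \phi \, d\mu$. Then for any $\nu \in M_T(X)$, the variational principle and non-negativity of entropy give
$$\int \phi \, d\nu \;\leq\; h(\nu) + \int \phi \, d\nu \;\leq\; P(\phi) \;=\; \int \phi \, d\mu,$$
so $\mu$ attains $\sup_{\nu \in M_T(X)} \int \phi \, d\nu$.

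Next, for $\beta \geq 1$ and any $\nu \in M_T(X)$, I would decompose
$$h(\nu) + \int \beta \phi \, d\nu \;=\; \Bigl( h(\nu) + \int \phi \, d\nu \Bigr) + (\beta - 1) \int \phi \, d\nu.$$
The first summand is at most $P(\phi) = \int \phi \, d\mu$ by definition of pressure, and the second summand is at most $(\beta - 1) \int \phi \, d\mu$ by the $\phi$-maximality just established together with $\beta - 1 \geq 0$. Adding yields $h(\nu) + \int \beta \phi \, d\nu \leq \beta \int \phi \, d\mu = h(\mu) + \int \beta \phi \, d\mu$. Taking $\nu = \mu$ shows equality is achieved, so $\mu$ is an equilibrium state for $\beta \phi$, as required.

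There is essentially no obstacle here; the content of the lemma is contained in the single observation that $h(\mu) = 0$ upgrades an equilibrium state into a $\phi$-maximizer. I note that one cannot simply invoke Lemma \ref{beta eq states} as a black box, since that lemma requires $\mathcal{F}$ to be the full collection of equilibrium states for $\phi$, whereas here $\mu$ is only assumed to be some equilibrium state (and there may well be others of positive entropy). The specialization to $h(\mu) = 0$ is exactly what closes this gap: condition (2) of Lemma \ref{beta eq states}, which in general must be assumed, becomes automatic, and the same two-line estimate carries through for the single measure $\mu$ without needing to characterize every equilibrium state of $\phi$.
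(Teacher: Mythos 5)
Your proof is correct and follows essentially the same route as the paper's: establish that $h(\mu)=0$ forces $\mu$ to be $\phi$-maximizing, then split $\beta\phi = \phi + (\beta-1)\phi$ and bound the two pieces by the equilibrium property and the maximality respectively. You spell out the maximality step that the paper leaves as ``easy to see,'' but the argument is the same.
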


\begin{proof} Since $h(\mu) = 0$, it is easy to see that $\mu$ must be $\phi$-maximizing. We now let $\nu \in M_T(X)$ and $\beta \geq 1$ and notice 
$$ h(\mu) + \int \beta \phi d\mu \geq h(\nu) + \int \phi d\nu + (1- \beta) \int \phi d\mu \geq h(\nu) + \int \phi d\nu + (1-\beta) \int \phi d\nu = h(\nu) + \int \phi d\nu. $$
The desired result immediately follows. 
\end{proof}

We now use this fact to show that there exists a potential that rapidly switches between the entropy dominated to energy dominated regime. 

\begin{obs}\label{switching} Let $(X, T)$ by a dynamical system such that $h$ is upper semi-continuous. Let $\mu$ be any ergodic measure of maximal entropy and $\nu$ be any ergodic measure with $0$ entropy. Then, there exists a potential $\phi$ such that for all $\beta > 1$, $\nu$ is the unique equilibrium state for $\beta \phi$ and for all $\beta \in (0, 1)$, $\mu$ is the unique equilibrium state for $\beta \phi$. 
\end{obs}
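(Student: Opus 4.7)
The plan is to reduce the observation to a single application of Jenkinson's theorem followed by a short pressure-decomposition argument. For the statement to be non-vacuous I will implicitly take $h(\mu) > 0$, since otherwise $h$ vanishes on all of $M_T(X)$ and the switching assertion degenerates.

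First I would apply Jenkinson's result from \cite{jenkinson}, which in the upper semi-continuous setting yields, for any weak-$*$ closed collection of ergodic measures $\mathcal{E}$, a potential whose set of equilibrium states is exactly $\overline{co}(\mathcal{E})$. Taking $\mathcal{E} = \{\mu,\nu\}$ (finite, hence closed) produces $\phi \in C(X)$ whose equilibrium states are exactly the line segment $[\mu,\nu]$. After subtracting the constant $\int \phi\,d\nu$ we may further assume $\int \phi\,d\nu = 0$, so that $P(\phi) = h(\nu) + \int \phi\,d\nu = 0$, and the equality of pressures along the segment forces $\int \phi\,d\mu = -h(\mu) < 0$. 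A quick check then shows that $\nu$ is the unique $\phi$-maximizing measure: if $\eta \notin [\mu,\nu]$ then $h(\eta) + \int \phi\,d\eta < 0$, so $\int \phi\,d\eta < -h(\eta) \leq 0$; and if $\eta = t\mu + (1-t)\nu$ with $t \in (0,1]$, affineness gives $\int \phi\,d\eta = -t\,h(\mu) < 0$. Hence $Max(\phi) = 0$, attained only at $\nu$.

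The switching behavior now falls out of two parallel decompositions of $h(\eta) + \beta \int \phi\,d\eta$. For $\beta > 1$,
$$h(\eta) + \beta \int \phi\,d\eta = \Bigl[h(\eta) + \int \phi\,d\eta\Bigr] + (\beta-1)\int \phi\,d\eta \leq 0 + 0 = 0,$$
with both summands nonpositive (by $P(\phi) = 0$ and $Max(\phi) = 0$ respectively), and equality forcing $\eta$ to be simultaneously a $\phi$-equilibrium state (so $\eta \in [\mu,\nu]$) and $\phi$-maximizing, pinning $\eta = \nu$. For $\beta \in (0,1)$,
$$h(\eta) + \beta \int \phi\,d\eta = (1-\beta)\,h(\eta) + \beta\Bigl[h(\eta) + \int \phi\,d\eta\Bigr] \leq (1-\beta)\,h(\mu),$$
with the first summand bounded by maximality of $h(\mu)$ and the second by $P(\phi) = 0$, and equality forcing $h(\eta) = h(\mu)$ together with $\eta \in [\mu,\nu]$; since entropy is linear on the segment, running from $0$ at $\nu$ to $h(\mu)$ at $\mu$, this pins $\eta = \mu$. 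The only real subtlety is bookkeeping — making sure that in each decomposition both terms are separately nonpositive, so that equality forces each to vanish — since everything else is one-dimensional linear interpolation along the face $[\mu,\nu]$ furnished by Jenkinson's theorem, and scaling $\beta$ away from $1$ just selects the appropriate endpoint of that face.
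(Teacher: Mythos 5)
Your proposal is correct and follows essentially the same route as the paper: apply Jenkinson's theorem to $\{\mu,\nu\}$, normalize so that $P(\phi)=0$ and $\int\phi\,d\nu=0$ (hence $\int\phi\,d\mu=-h(\mu)$), and then split $h(\eta)+\beta\int\phi\,d\eta$ into two separately controlled terms whose equality cases pin down $\nu$ for $\beta>1$ and $\mu$ for $\beta<1$; your explicit remark that the statement degenerates when $h(\mu)=0$ matches an assumption the paper's own computation uses implicitly. The only cosmetic difference is that you argue over all invariant measures via convexity of the segment $[\mu,\nu]$, whereas the paper reduces to ergodic measures, but the underlying decomposition is the same.
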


\begin{proof} Let $\mu, \nu \in M_T(X)$ as in the statement and let $\phi \in C(X)$ such that $\mu$ and $\nu$ are the only ergodic equilibrium states for $\phi$. This exists by a direct application of Corollary 3.17 of \cite{ruelle} or Theorem 5 of \cite{jenkinson}. By adding a constant, we may also assume without loss of generality that $P(\phi) = 0$. In particular, we know 
$$\int \phi d\mu = - h(\mu). $$
We now let $\beta \in (0, 1)$ and notice
$$h(\mu) + \int \beta \phi d\mu = h(\mu) - \beta h(\mu) > 0 = h(\nu) + \int \phi d\nu.  $$
Thus $\nu$ is not an equilibrium state for $\beta \phi$ for $\beta < 1$. We now let $\gamma \in M_{erg}(X)$ such that $\gamma \neq \mu, \nu$. Since $\gamma$ is not an equilibrium state for $\phi$, we know  
$$h(\gamma) + \int \phi d\gamma < h(\mu) + \int \phi d\mu = 0. $$
It follows that 
$$\int \phi d\gamma <  - h(\gamma). $$
And thus, 
$$h(\gamma) + \int \beta \phi d\gamma < (1-\beta) h(\gamma) \leq (1-\beta) h(\mu) = h(\mu) + \int \beta \phi d\mu. $$
We can now conclude that $\mu$ is the unique equilibrium state for $\beta \phi$ for all $\beta \in (0, 1)$. 

$\;$

We now let $\beta > 1$ and note by Observation \ref{freezing subset}, for all $\beta \geq 1$, $\nu$ is an equilibrium state for $\beta \phi$. We will now show that $\nu$ is the only equilibrium state by first observing that for any $\beta > 1$, we have 
$$ h(\mu) + \int \beta \phi d\mu = (1-\beta) h(\mu) < 0 = h(\nu) + \int \beta \phi d\nu. $$
We now let $\gamma \in M_{erg}(X)$ be any other ergodic measure and note since $\gamma$ is not an equilibrium state for $\phi$, we know as above, 
$$\int \phi d\gamma <  - h(\gamma). $$
We therefore have for all $\beta > 1$, 
$$h(\gamma) + \int \beta \phi d\gamma < (1-\beta) h(\gamma) \leq 0 = h(\nu) + \int \beta \phi d\nu. $$
We can now conclude the desired result. 
\end{proof}

\subsection*{Data Availability} We do not analyze or generate any datasets, because our work proceeds within a theoretical and mathematical approach.

\subsection*{Disclosures} The author has no conflicts of interest to declare that are relevant to the content of this article. No funding was received to assist with the preparation of this manuscript.

\bibliographystyle{plain}
\bibliography{mybib}

\end{document}